\newtheorem{theo}{Theorem}[section]
\newtheorem{prop}[theo]{Proposition}
\newtheorem{defi}[theo]{Definition}
\theoremstyle{definition}
\newtheorem{rema}[theo]{Remark}
\newcommand{\bel}{\begin{equation} \label}
\newcommand{\ee}{\end{equation}}
\newcommand{\R}{{\mathbb R}}
\def\beq{\begin{equation}}
\def\eeq{\end{equation}}
\newcommand{\bea}{\begin{eqnarray}}
\newcommand{\eea}{\end{eqnarray}}
\newcommand{\beas}{\begin{eqnarray*}}
\newcommand{\eeas}{\end{eqnarray*}}
\begin{document}

\begin{center}
{\Large \bf Spectral monodromy of small non-selfadjoint quantum perturbations of completely integrable Hamiltonians}
\medskip

(\today, provisional version)

\end{center}

\medskip

\begin{center}
{\footnote{FITA, Vietnam National University of Agriculture, Hanoi, Vietnam \& Vietnam Institute for Advanced Study in Mathematics, Hanoi, Vietnam //
E-mail: pqsang@vnua.edu.vn}{Quang Sang PHAN}}
\end{center}

\begin{abstract} ~\\
We define a monodromy, directly from the spectrum of small non-selfadjoint perturbations of a selfadjoint semiclassical operator with two degrees of freedom, which is classically integrable. It is a combinatorial invariant that obstructs globally the existence of lattice structure of the spectrum, in the semiclassical limit. Moreover this spectral monodromy allows to recover a topological invariant (the classical monodromy) of the corresponding integrable system.
\end{abstract}

\medskip

{\bf  AMS 2010 Mathematics Subject Classification:} 35P20, 81Q12.

{\bf  Keywords:} integrable system, non-selfadjoint, spectral asymptotic, pseudo-differential operators.

\tableofcontents


\section{Introduction}



\subsection{Motivation}

We propose in this article a way of detecting the monodromy of a quantum Hamiltonian which is classically integrable, by looking at non-selfadjoint perturbations.

In the classical theory, the classical monodromy is defined for a completely integrable system on symplectic manifolds as a topological invariant that obstructs the existence of global action-angle coordinates on the phase space, see \cite{Duis80}.

Quantum monodromy was detected a long time ago in \cite{Cush88} and completely defined in \cite{Vu-Ngoc99}, in the joint spectrum of system of selfadjoint operators that commute, in the sense of the semiclassical limit, as the classical monodromy of the underlying classical system.

However, a mysterious question is whether a monodromy can be defined for only one semiclassical operator? That is how to detect the modification of action-angle variables from only one spectrum?

We are interested in the globally structure of the spectrum of non-selfadjoint $h-$Weyl-pseudodifferential operators with two degrees of freedom, which are small non-selfadjoint perturbations of a selfadjoint operator, in the semiclassical limit.
Such an operator is of the form
 \begin{equation} \label{tt}
  P_{\varepsilon}= P(x,hD_x,\varepsilon; h ), \end{equation}
where the unperturbed operator $P:= P_{\varepsilon=0}$ is formally selfadjoint, and $\varepsilon$ is a small parameter.
In this work $\varepsilon$ is assumed to depend on the classical parameter $h$ and in the regime $h \ll \varepsilon = \mathcal{O}(h^\delta)$, with $0< \delta <1 $.

The first answer for the above problem, which was given in Ref. \cite{QS14}, is a particular case of the present work. In that work, the operators had the simple form $P+ i  \varepsilon Q$, such that the corresponding principal symbols $p$ of $P$ and $q $ of $Q$ commute for the Poisson bracket, $\{p,q\}=0$.
Here we develop this result in assuming only that the principal symbol of $P_{\varepsilon}$ in \eqref{tt} is of the form
$$p_{\varepsilon}=p+ i  \varepsilon q +\mathcal O(\varepsilon^2),$$ with $p$ is a completely integrable Hamiltonian.

It is known from the spectral asymptotic theory (see \cite{Hitrik07}) that, under some suitable global assumption, the spectrum of the perturbed operators has locally the form of a deformed discrete lattice. The eigenvalues admit asymptotic expansions in $h$ and $\varepsilon$.
Moreover, we shaw prove the mail result (Theorem \ref{mtheo1}) that the spectrum is an \emph{asymptotic pseudo-lattice} (see Definition \ref{pseu-lattice}). Therefore, as an application
 from \cite{QS14}, a combinatorial invariant of the spectral lattice- the \emph{spectral monodromy} is well defined, directly from the spectrum.

Moreover, this quantum result is strictly related to the classical results. The spectral monodromy can be identified to the classical monodromy of the completely integrable system $p$.

\subsection{Brief description for the spectral monodromy }



It knows from the spectral asymptotic theory (see \cite{Hitrik07}) that, under an ellipticity condition at infinity (see \eqref{el con}), the spectrum of the perturbed operators \eqref{tt} is discrete, and included in a horizontal band of size $\mathcal{O}(\varepsilon)$.
Moreover, that work allows give asymptotic expansions of the spectrum located in some small domains of size $\mathcal{O} (h ^\delta) \times \mathcal{O }(\varepsilon h^\delta)$ of the spectral band, called \emph{good rectangles}.

As we shaw show in this work that there is a correspondence (in fact a local diffeomorphism, see Proposition \ref{D.A inverse} and a proof for this point in \cite{QS14}), denoted by $f$, from the spectrum contained in a good rectangle $R^{(a)}(\varepsilon,h)$ to a part of $h \mathbb Z^2 $, modulo $\mathcal O(h^\infty)$,
 \begin{eqnarray}
    R^{(a)}(\varepsilon,h) \ni  \mu  \mapsto    f(\mu,\varepsilon; h) \in h \mathbb Z^2 +\mathcal O(h^\infty).
\end{eqnarray}
Here $a \in \mathbb R^2$ is introduced to fix the center of the good rectangle. This map is called a \emph{$h-$local chart} of the spectrum.
The spectrum therefore has the structure of a deformed lattice, with horizontal spacing $h$ and vertical spacing $\varepsilon h$.
See Figure 1 below.
\begin{figure}[!h]
\begin{center}
\includegraphics[width=0.8 \textwidth]{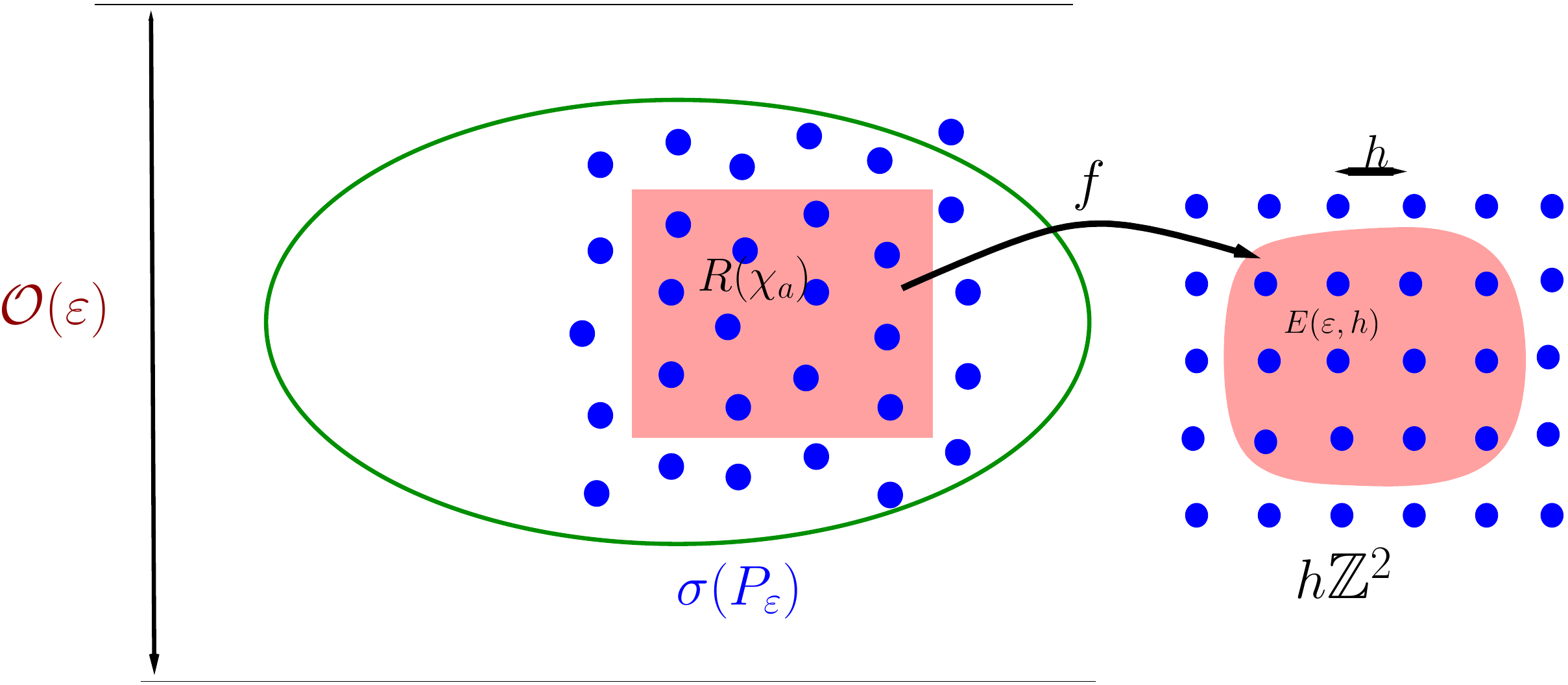}

Figure 1. $h-$local chart of the spectrum
\end{center}
\end{figure}

Such good rectangles correspond to the Diophantine invariant tori in the phase space, on which the Hamiltonian flow of the unperturbed part, that is $p$, is quasi-periodic of constant frequency, see \eqref{frequence}.
However, there are many such rectangles in the spectral band due to the existence of Diophantine invariant tori in the phase space. The set of the center of the good rectangles is nowhere dense in the complex plane, but of large measure. It is well known from the classical theory.

We shall call a family of close $h-$local charts on a small domain $U^*(\varepsilon)$ a \emph{local pseudo-chart} of the spectrum.
We notice that each $h-$local chart is normally valid for one good rectangle. However, an important fact we show in this paper that we may build a local pseudo-chart such that the leading term in asymptotic expansions of their $h-$local charts in the small parameters $h, \varepsilon $ is locally well defined on $U^*(\varepsilon)$.
The construction of local pseudo-charts will be done in detail in Section \ref{sp-lattice Sec}.

With regard to the global problem, we consider the spectrum as a discrete subset of the complex plane. We can apply a result of \cite{QS14} about a discrete set, called an \emph{asymptotic pseudo-lattice} (see also Definition \ref{pseu-lattice}), for the spectrum. It shows that the differential of the transition maps between two overlapping local pseudo-charts $ (f_i, U^i(\varepsilon)) $ and $ (f_ j, U^j (\varepsilon))$ is in the group $GL(2,\mathbb Z)$, modulo $\mathcal O(\varepsilon, \frac{h}{\varepsilon})$:
    $$ d (\widetilde{f}_i) = M_{ij} d( \widetilde{f}_j)+ \mathcal O(\varepsilon, \frac{h}{\varepsilon}),$$
where $\widetilde{f}_i= f_i \circ \chi$, $\widetilde{f}_j= f_j \circ \chi$, with $\chi$ is the function $ (u_1,u_2) \mapsto (u_1,\varepsilon u_2)$, and $M_{ij} \in GL(2, \mathbb Z)$ is an integer constant matrix.

Let $U (\varepsilon)$ be a bounded open domain in the spectral band and cover it by an arbitrary (small enough) locally finite covering of local pseudo-charts
$ \{ \left(  f_ j ,  U^j (\varepsilon) \right) \}_{j \in \mathcal{J} }$, here $\mathcal{J}$ is a finite index set. Then the spectral monodromy on $U (\varepsilon)$ is defined as the unique $1$-cocycle $\{  M_{ij} \} $, modulo coboundary in the first \v{C}ech cohomology group.
~\\
It is clear that if the spectral monodromy is not trivial, then the transition maps either, and the spectrum hasn't a smooth global lattice structure.


\section{Some basics}

\subsection{Weyl-quantization}  \label{sec2.1}
We will work throughout this article with pseudodifferential operators obtained by the $h-$Weyl-quantization of a standard space of symbols on $T^*M =\mathbb R^{2n}_{(x,\xi)}$, here $M= \mathbb R^n$ or a compact manifold of $n$ dimensions, and in particular $n=2$. We denote $\sigma $ the standard symplectic $2-$form on $T^*M$.

In the following we represent the quantization in the case $M= \mathbb R^n$. In the manifold case, the quantization is suitably introduced.
We refer to Refs. \cite{Dimas99}-\cite{Shubin01} for the theory of pseudodifferential operators.


        \begin{defi} \label{fonc ord}
                        A function $m: \mathbb R^{2n} \rightarrow (0, + \infty)$ is called an order function
                     if there are constants  $C,N >0$ such that
                                $$m(X)  \leq C \langle X-Y\rangle^{ N} m(Y), \forall X,Y \in \mathbb R^{2n},$$
        with notation $\langle Z\rangle= (1+ |Z|^2)^{1/2}$ for $Z \in \mathbb R^{2n}$.
        \end{defi}


         \begin{defi}
                        Let $m$ be an order function and $k \in \mathbb R$, we define classes of symbols of $h$-order $k$, denoted by $S^k(m)$ (families of functions), of $(a(\cdot;h))_{h \in (0,1]}$ on $\mathbb R^{2n}_{(x,\xi)}$ by
                        \begin{equation}
                                S^k(m)= \{ a \in C^\infty (\mathbb R^{2n})
                                 \mid  \forall \alpha \in \mathbb N ^{2n}, \quad |\partial^\alpha a | \leq  C_\alpha h^k m \} ,
                        \end{equation}
         for some constant $C _\alpha >0$, uniformly in $h \in (0,1]$. \\
         A symbol is called $\mathcal O(h^\infty)$ if it's in $\cap _{k \in \mathbb R } S^k(m):= S^{\infty}(m) $.
        \end{defi}

        Then $ \Psi^k(m)(M)$ denotes the set of all (in general unbounded) linear operators $A_h$ on $L^2(\mathbb R^n)$, obtained from the $h-$Weyl-quantization of symbols $a(\cdot;h) \in S^k(m) $ by the integral:
        \begin{equation} \label{symbole de W}
                            (A_h u)(x)=(Op^w_h (a) u)(x)= \frac{1}{(2 \pi h)^n}
                                 \int_{ \mathbb R^{2n}} e^{\frac{i}{h}(x-y)\xi}
                                 a(\frac{x+y}{2},\xi;h) u(y) dy d\xi.
        \end{equation}

In this work, we always assume that symbols admit classical asymptotic expansions in integer powers of $h$.
The leading term in this expansion is called the principal symbol of operators.

\subsection{Classical theory}

We recall here some notions and results from the classical theory.

\begin{defi}
 An integrable system on a symplectic manifold $(W, \sigma)$ of dimension $ 2n $ ($n \geq 1$) is given $n$ smooth real-valued functions $f_1, \dots, f_n$ in involution with respect to the Poisson bracket generated from the symplectic form $ \sigma$, whose differentials are almost everywhere linearly independent.
In this case, the map $$ F=(f_1, \dots, f_n ): M \rightarrow \mathbb R^n $$ is also called an \textit{integrable system}, or a \textit{momentum map}.

A smooth function $f_1$ is called completely integrable if there exists $n-1$ functions $f_2, \dots, f_n$ such that $F=(f_1, \dots, f_n )$ is an integrable system.
\end{defi}

Let $U$ be an open subset of regular values of $F$. Then we have,

\begin{theo}[Angle-action theorem] (Refs. \cite{Arnold67}, \cite{Arn63}, and \cite{Duis80})  \label{A-A}
Let $c \in U$, and $\Lambda_c$ be a compact regular leaf of the fiber $F^{-1}(c)$. Then there exists an open neighborhood $V= V^c$ of $\Lambda_c$ in $W$ such that $F\mid_ {V} $ defines a smooth locally trivial fibre bundle onto an open neighborhood $ U^c \subset U$ of $c$, whose fibres are invariant Lagrangian $n-$tori. Moreover, there exists a symplectic diffeomorphism $\kappa = \kappa^c$,
    $$\kappa= (x,\xi): V \rightarrow \mathbb T ^n \times  A, $$
with $A= A^c \subset \mathbb R^n$ is an open subset, such that $F\circ \kappa^{-1}(x, \xi)= \varphi(\xi)$ for all $x \in \mathbb T^n $,
and $\xi \in A$, and here $\varphi= \varphi^c: A \rightarrow \varphi (A)=U^c $ is a local diffeomorphism. We call $( x, \xi)$ local angle-action variables near $\Lambda_c$ and $(V, \kappa)$ a local angle-action chart.
\end{theo}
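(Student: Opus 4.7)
The plan is to follow the standard Arnold--Liouville--Mineur scheme in four steps: build a locally free commuting $\R^n$-action near $\Lambda_c$, identify each compact fiber as a torus $\R^n/\Gamma$, introduce angle variables by normalizing the action by the period lattice and action variables by integrating the Liouville $1$-form over a basis of cycles, and finally verify that the resulting chart is symplectic.

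First, since $\{f_i,f_j\}=0$, the Hamiltonian vector fields satisfy $[X_{f_i},X_{f_j}]=X_{\{f_i,f_j\}}=0$, so composing their flows defines a local $\R^n$-action $\Phi_t=\phi^{t_1}_{f_1}\circ\cdots\circ\phi^{t_n}_{f_n}$ on a neighborhood of $\Lambda_c$. Regularity of $c$ forces the $df_j$ to remain independent on a neighborhood of $\Lambda_c$, hence the $X_{f_j}$ are independent and the action is locally free. Each orbit lies in a level set of $F$, and the tangent space of $\Lambda_c$ is spanned by vectors that are pairwise $\sigma$-orthogonal since $\sigma(X_{f_i},X_{f_j})=\{f_i,f_j\}=0$; thus $\Lambda_c$, and every nearby compact leaf, is Lagrangian. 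Compactness of $\Lambda_c$ forces the stabilizer $\Gamma_c\subset\R^n$ of any point to be a full-rank discrete lattice, whence $\Lambda_c\cong\R^n/\Gamma_c\cong\mathbb T^n$. A local transversal through $\Lambda_c$ combined with the implicit function theorem extends this to nearby fibers and exhibits $F|_V:V\to U^c$ as a locally trivial $\mathbb T^n$-bundle with Lagrangian fibers.

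Second, the lattice $\Gamma_{c'}$ depends smoothly on $c'$, so locally one can pick a smooth basis $e_1(c'),\dots,e_n(c')$ of $\Gamma_{c'}$; rescaling the $\R^n$-action by this basis produces smooth angle coordinates $x\in\mathbb T^n$ on each fiber. For the actions, select smoothly varying cycles $\gamma_j(c')$ representing generators of $H_1(\Lambda_{c'};\Z)$ and set
$$\xi_j(c')=\frac{1}{2\pi}\oint_{\gamma_j(c')}\alpha,$$
where $\alpha$ is a local primitive of $\sigma$ on $V$ (which exists because the fibers are Lagrangian and $V$ retracts onto $\mathbb T^n$). A Stokes-type computation identifies the Jacobian of $c'\mapsto\xi(c')$ with the matrix expressing $e_j(c')$ in the standard basis of $\R^n$, so it is invertible; thus $c'\mapsto\xi$ is a local diffeomorphism onto some open $A\subset\R^n$ and we take $\varphi$ to be its inverse, giving $F\circ\kappa^{-1}(x,\xi)=\varphi(\xi)$.

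The delicate step is verifying that $\kappa=(x,\xi)$ is symplectic. By construction $\xi$ depends only on $F$, so $\{\xi_i,\xi_j\}=0$, and the normalization of angles by the period lattice yields $\{\xi_j,x_k\}=\delta_{jk}$. The nontrivial identity $\{x_i,x_j\}=0$ is equivalent to the closedness on each fiber of the $1$-form $\alpha-\sum_j\xi_j\,dx_j$, which is forced precisely by the definition of $\xi_j$ as periods of $\alpha$ along a basis of cycles. The main obstacle in the whole scheme is the simultaneous smooth choice of a period-lattice basis and a set of homology generators; it is exactly this local-only nature of the choice that confines the construction to a neighborhood $U^c$ of $c$ rather than the whole $U$, and the obstruction to performing such choices globally is the topological invariant that later resurfaces as the classical monodromy of $F$.
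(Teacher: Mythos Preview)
The paper does not supply a proof of this theorem; it quotes it as the classical Arnold--Liouville--Mineur result with references to Arnold and Duistermaat. Your sketch is precisely the standard argument found in those sources: build the commuting $\R^n$-action from the flows of the $f_j$, use compactness of $\Lambda_c$ to identify each fiber with $\R^n/\Gamma\cong\mathbb T^n$, define the actions by Mineur's period integrals, and then check symplecticity. So there is nothing in the paper to compare against beyond the attribution, and your approach is the expected one.

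One step deserves tightening. Your justification of $\{x_i,x_j\}=0$ via ``closedness on each fiber of $\alpha-\sum_j\xi_j\,dx_j$'' does not do the job: that $1$-form is automatically closed on each Lagrangian fiber (both $\alpha|_{\Lambda}$ and $\sum_j\xi_j\,dx_j|_{\Lambda}$ are closed there, the latter because the $\xi_j$ are fiberwise constant), so fiberwise closedness carries no information about $\{x_i,x_j\}$. What one actually obtains from Lagrangianity of the fibers together with $\{\xi_i,\xi_j\}=0$ and $X_{\xi_j}=\partial_{x_j}$ is that, in the $(x,\xi)$ coordinates,
\[
\sigma=\sum_j d\xi_j\wedge dx_j+\beta,\qquad \beta=\sum_{i<j} a_{ij}(\xi)\,d\xi_i\wedge d\xi_j,
\]
with $\beta$ a closed $2$-form in $\xi$ alone; one then removes $\beta$ by a shift $x_j\mapsto x_j+g_j(\xi)$ chosen so that $\sum_j dg_j\wedge d\xi_j=-\beta$, which exists locally since $\beta$ is closed. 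After this adjustment the chart is genuinely symplectic and $\{x_i,x_j\}=0$ holds. This is a routine fix and does not affect the rest of your outline, including your closing remark that the local-only choice of lattice basis is exactly the seed of the classical monodromy used later in the paper.
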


Notice that one chooses usually the local chart such that the torus $\Lambda_c$ is sent by $\kappa$ to the zero section $T ^n \times \{0 \}$. By this theorem, for every $a \in U^c $, then $\Lambda_a:= F^{-1}(a) \cap V^c$ is an invariant Lagrangian $n-$torus, called a Liouville torus, and we write
   \begin{equation} \label{tori} \Lambda_a \simeq \kappa(\Lambda_a)=\mathbb T ^n \times \{ \xi_a \}:= \Lambda_{\xi_a}, \end{equation}
with some $\xi_a \in A$.


\section{Spectral asymptotics}  \label{s a}
In this section, we shall apply the spectral asymptotic theory from \cite{Hitrik07} for small non-selfadjoint perturbations of selfadjoint operators in two dimensions to give the asymptotics of eigenvalues located in some suitable small windows of the complex plane.

    \subsection{General assumptions}  \label{hypothese}

 We give here some global geometric assumption on the dynamics of the principal symbol of unperturbed selfadjoint operators, as in Refs. \cite{Hitrik07}, and \cite{Hitrik04}.

    $ M $ denotes $\mathbb R^2$ or a connected compact analytic real (Riemannian) manifold of dimension $ 2 $ and we denote by $\widetilde{M}$
    the canonical complexification of $ M $, which is either $ \mathbb C ^ 2 $ in the Euclidean case or a Grauert tube in the case of manifold
    (see Ref. \cite{Burns01}).


    We consider a non-selfadjoint $ h-$pseudodifferential operator $P_{\varepsilon}$ on $ M $ and suppose that
    \begin{equation} \label{fm aa}
         P_{\varepsilon=0}:= P \quad \textrm{is formally selfadjoint}.
    \end{equation}

    Note that if $ M = \mathbb R ^ 2 $, the volume form $ \mu (dx) $ is naturally induced by the Lebesgue measure on $ \mathbb R ^ 2 $.
    If $ M $ is a compact Riemannian manifold, then the volume form $ \mu(dx) $ is induced by the given Riemannian structure of $ M $.
    Therefore in both cases the volume form is well defined and the operator $P_{\varepsilon}$ may be seen as an (unbounded) operator
    on $L^2(M, \mu(dx)) $.
    We always denote the principal symbol of $ P_ {\varepsilon} $ by $ p_ \varepsilon $ which is defined on $ T ^ * M $.

    We will assume the ellipticity condition at infinity for $ P_ {\varepsilon} $ at some energy level $E \in \mathbb R$ as follows:

    When $M=\mathbb R^2$, let
                        \begin{equation} \label{dkso2}
                            P_{\varepsilon}= P(x,hD_x,\varepsilon; h )
                        \end{equation}
   be the Weyl quantification of a total symbol $P(x, \xi,\varepsilon; h )$ depending smoothly on $\varepsilon$ in a neighborhood of $(0, \mathbb R) $ and taking values in the space of holomorphic functions of $(x,\xi)$ in a tubular neighborhood of $\mathbb R^4$ in $\mathbb C^4$ on which we assume that:
                \begin{equation}
                           | P(x, \xi,\varepsilon; h ) | \leq  \mathcal O(1) m(Re(x,\xi)).
                \end{equation}
    Here $ m $ is an order function in the sense of Def. \ref{fonc ord}.
    We assume moreover that $ m> 1 $ and $ P_ {\varepsilon} $ is classical of order $0$,
     \begin{equation}
        P(x, \xi,\varepsilon; h ) \sim \sum_{j=0}^\infty
                             p_{j,\varepsilon}(x,\xi) h^j, h \rightarrow 0,
     \end{equation}
     in the selected space of symbols as in Section \ref{sec2.1}.
     In this case, the principal symbol $p_\varepsilon = p_{0,\varepsilon}$ is the first term of the above expansion,
     and the ellipticity condition at infinity is
                        \begin{equation}    \label{el con}
                            |p_{\varepsilon}(x,\xi) - E| \geq \frac{1}{C} m(Re(x,\xi)), \mid (x,\xi)\mid \geq C,
                        \end{equation}
     for some $ C> 0 $ large enough.

     When $ M $ is a compact manifold, we consider $ P_ \varepsilon $ a differential operator on $ M $ such that in local coordinates $ x $ of $ M $, it is of the form:
        \begin{equation}
                            P_\varepsilon = \sum_{|\alpha |\leq m} a_{\alpha,\varepsilon}(x;h)(hD_x)^\alpha,
        \end{equation}
  where $D_x= \frac{1}{i} \frac{\partial}{\partial x}$ and $a_{\alpha,\varepsilon}$ are smooth functions of $\varepsilon$ in a neighborhood of $0$
   with values in the space of holomorphic functions on a complex neighborhood of $x=0$. \\
  We assume that these $a_{\alpha,\varepsilon}$ are classical of order $0$,
                         \begin{equation}
                           a_{\alpha,\varepsilon}(x;h) \sim \sum_{j=0}^\infty
                             a_{\alpha,\varepsilon,j}(x) h^j, h \rightarrow 0,
                        \end{equation}
  in the selected space of symbols.
  In this case, the principal symbol $p_\varepsilon$ in the local canonical coordinates associated $(x,\xi) $ on $T^*M $ is \begin{equation}
                           p_\varepsilon(x,\xi)= \sum_{|\alpha | \leq m} a_{\alpha,\varepsilon,0}(x) \xi^{\alpha},
  \end{equation}
  and the elipticity condition at infinity is
            \begin{equation} \label{el con2}
                            |p_{\varepsilon}(x,\xi) -E | \geq \frac{1}{C} \langle \xi \rangle ^m, (x,\xi) \in T^*M, \mid \xi \mid \geq C,
            \end{equation}
  for some $C>0$ large enough.
  Notice here that $ M $ has a Riemannian metric, then $\mid \xi \mid$ and $\langle \xi \rangle= (1+  \mid \xi \mid ^2)^{1/2}$ are well defined.

  It is known from Refs. \cite{Hitrik07}, and \cite{Hitrik04} that with the above conditions, the spectrum of $ P_ \varepsilon $ in
  a small but fixed neighborhood of $ E$ in $\mathbb C $ is discrete,
  when $h>0, \varepsilon \geq 0$ are small enough. Moreover, this spectrum is contained in a horizontal band of size $\varepsilon$:
    \begin{equation} \label{band}   |\mathrm{Im} (z)| \leq \mathcal O(\varepsilon ).\end{equation}

Let $p= p_{\varepsilon=0}$, it is principal symbol of the selfadjoint unperturbed operator $ P $
and therefore real. And let $q=\frac{1}{i}(\frac{\partial p_\varepsilon}{\partial \varepsilon})_{\varepsilon
                    =0}$ and assume that $q$ is a bounded analytic function on $T^*M$.
We can write the principal symbol
                            \begin{equation}  \label{symb prin}
                                  p_\varepsilon=p+i \varepsilon q+ \mathcal O (\varepsilon ^2).
                            \end{equation}

We assume that $ p $ is completely integrable, i.e., there exists an integrable system
\begin{equation} \label{F}
F=(p,f): T^*M \rightarrow \mathbb R^2
\end{equation}
Then the space of regular leaves of $F$ is foliated by Liouville Lagrangian invariant tori by Theorem \ref{A-A}.

We assume also that
  \begin{equation}p^{-1}(E) \cap T^*M  \ \textrm{is connected}, \end{equation}
and the energy level $ E $ is regular for $ p $, i.e., $dp \neq 0$ on $p^{-1}(E) \cap T^*M$. We would like to notice that the level set $p^{-1}(E)$ is compact, due to the ellipticity condition at infinity \eqref{el con} or \eqref{el con2}

Then the energy space $p^{-1}(E)$ is decomposed into
a singular foliation:
 \begin{equation} \label{dk18}
 p^{-1}(E) \cap T^*M   = \bigcup_{a \in J} \Lambda_a ,\end{equation}
 where $ J $ is assumed to be a compact interval, or, more generally, a connected graph with a finite number of vertices and of edges, see pp. 21-22 and 55 of Ref. \cite{Hitrik07}.

We denote by $S$ the set of vertices.
For each $a \in J$, $\Lambda_a$ is a connected compact subset invariant with respect to $H_p$.
Moreover, if $a \in J\backslash S$, $\Lambda_a$ are the Liouville tori depending
analytically on $a$. These tori are regular leaves corresponding to regular values of $F$.
Each edge of $ J $ can be identified with a bounded interval of $ \mathbb R $ and we have therefore
a distance on $J$ in the natural way.




We denote $H_p$ the Hamiltonian vector field of $p$, defined by $\sigma (H_p, \cdot)= -dp (\cdot)$.
For each $a \in J$, we define a compact interval in $\mathbb R$:
                            \begin{equation} \label{Q vo cung}
                                    Q_\infty(a)=
                                     \big [ \lim_{T\rightarrow \infty} \inf_{\Lambda_a} Re \langle q \rangle _T,
                                      \lim_{T\rightarrow \infty} \sup_{\Lambda_a} Re \langle q \rangle _T\big],
                            \end{equation}
where $\langle q \rangle _T$, for $T>0$, is the symmetric average time $T$ of $q$ along the
$H_p-$flow, defined by
                            \begin{equation}  \label{t-average}
                                 \langle q \rangle _T= \frac{1}{T} \int_{-T/2}^{T/2} q \circ exp(t H_p) dt.
                            \end{equation}
Then we can improve \eqref{band} that the spectrum the of $P_\varepsilon$ in the neighborhood of $E $ in $\mathbb C $ is located in the band
                      \begin{equation}  \label{loca. spectre 2}
                           \mathrm{Im} \left( \sigma(P_\varepsilon) \cap \{z \in \mathbb C: |Re z -E| \leq \delta \} \right) \subseteq
                             \varepsilon \big [ \inf \bigcup_{a \in J}Q_\infty(a)-o(1),
                              \sup \bigcup_{a \in J}Q_\infty(a) +  o(1) \big ],
                      \end{equation}
when $\varepsilon, h, \delta \rightarrow 0$ (see Ref. \cite{Hitrik07}).

From now, for simplicity, we will assume that $q$ is \textbf{real valued} (in the general case, simply replace $q$ by its real part $\mathrm{Re}(q)$ in regard to \eqref{symb prin}).

Each torus $\Lambda_a$, with $ a  \in J\backslash S$, locally can be embedded
in a Lagrangian foliation of $H_p-$invariant tori. By Theorem \ref{A-A}, there are
analytic local angle-action coordinates on an open neighborhood $V$ of $\Lambda_a$
\begin{equation}  \label{coor} \kappa= (x,\xi): V \rightarrow \mathbb T ^2 \times  A, \end{equation}
such that $\Lambda_a \simeq \Lambda_{\xi_a}$, $\xi_a \in A$, and $p$ becomes only a function of $\xi$,
           \begin{equation} \label{p}
                    p\circ \kappa^{-1} =p(\xi)= p(\xi_1, \xi_2), \ \xi \in A.           \end{equation}

Let $\Lambda \subset V $ be an arbitrary Liouville torus (close to $\Lambda_a$). We have
\begin{equation}  \label{Lam}
\Lambda \simeq \Lambda_{\xi}, \xi \in A .
\end{equation}
We define $\langle q \rangle $-the average of $ q $ on the torus $\Lambda$, with respect to the natural Liouville measure on $\Lambda$, denoted by $\langle q \rangle_{\Lambda} $, as following
                      \begin{equation} \label{moyenne de q}
                       \langle q \rangle_{\Lambda}= \int_{\Lambda}q .\end{equation}

\begin{rema}
 In the action-angle coordinates $(x,\xi)$ given by \eqref{coor}, we have
 \begin{equation} \label{moyenne2}
 \langle q \rangle_{\Lambda} =  \langle q \rangle_{\Lambda_ \xi}= \langle q \rangle (\xi)=
 \frac{1}{(2\pi)^2}\int_{\mathbb{T}^2}q(x,\xi)dx, \ \xi \in A.
 \end{equation}
In particular, $\langle q \rangle_{\Lambda_a}=\langle q \rangle(\xi_a)$.
\end{rema}

It is true that $\langle q \rangle_{\Lambda_a} $ depends analytically on $a \in J\backslash S$, and
we assume that it can be extended continuously on $J$. Furthermore, we assume that the function $a
\mapsto \langle q \rangle_{\Lambda_a} = \langle q \rangle(\xi_a)$ is not identically constant on
any connected component of $J\backslash S$, and that
\begin{equation}
\textrm{$dp(\xi)$ and $d \langle q \rangle (\xi)$ are $\mathbb R-$linearly independent at $\xi_a$.}
\end{equation}

Let $\Lambda \subset V $ be a Liouville torus as in \eqref{Lam}.
 Then we define the frequency of $\Lambda$ (also of $\Lambda_{\xi}$) by
\begin{equation} \label{frequence}
\omega(\xi)= \frac{\partial p}{\partial \xi} (\xi)= \big ( \frac{\partial p}{\partial \xi_1} (\xi),
 \frac{\partial p}{\partial \xi_2} (\xi) \big ), \ \xi \in A ,
 \end{equation}
 and the rotation number of $\Lambda$ by
 \begin{equation} \label{rho}
 \rho(\xi)=  \big [ \frac{\partial p}{\partial \xi_1} (\xi):
 \frac{\partial p}{\partial \xi_2} (\xi) \big ], \ \xi \in A ,
 \end{equation}
 viewed as an element of the real projective line. It is clear that $\rho$ depends analytically on $\xi \in A$ and we shall assume that the restricted function
 \begin{equation} \label{w}
\textrm{ $a \mapsto \rho(a):= \rho(\xi_a)$ is not identically constant on any connected
component of $J \backslash S$.}
\end{equation}

\begin{rema}[see pp. 56-57 of Ref. \cite{Hitrik07}]  \label{rem1}
For $a \in J\backslash S$, if $\rho(a) \notin \mathbb{Q}$, that means the frequency $\omega(\xi_a)$
is non resonant, then along the torus $\Lambda_a$, the Hamiltonian flow of $p$ is ergodic. Hence the
limit of $\langle q \rangle _T$, when $ T \rightarrow \infty$ exists, and is
equals to $ \langle q \rangle_{\Lambda_a}$. Therefore we
have $$Q_\infty(a)= \{\langle q \rangle_{\Lambda_a}\}.$$
\end{rema}


\subsection{Asymptotics of eigenvalues}

The spectral asymptotic theory (see Refs. \cite{Hitrik04}, and \cite{Hitrik07}) allows us to give the asymptotic description of all the eigenvalues
of $P_\varepsilon$ in some adapted small complex windows of the spectral band, which are associated with
Diophantine tori in the phase space. The force of the perturbation $\varepsilon $ is small and can be dependent or independent of the classical parameter $h$.
However in this work we consider the case when $\varepsilon$ is sufficiently small, dependent
on $h$, and in the following regime
\beq  \label{reg}
 h \ll \varepsilon = \mathcal{O}(h^\delta),
\eeq
 where $\delta >0$ is some number small enough but fixed.
In this case, spectral results are related to $(h,\varepsilon)$-dependent small windows.
\begin{defi}  \label{diop}
 Let $\alpha >0 $, $d>0$, and $\Lambda \simeq \Lambda_{\xi} $ be a $H_p-$invariant Lagrangian torus, as in \eqref{Lam}.
 We say that $\Lambda$ is $(\alpha,d)-$Diophantine if its frequency $\omega(\xi)$, defined in
 (\ref{frequence}), satisfies
 \begin{equation}  \label{dn alpha-d dioph}
     \omega(\xi) \in D_{\alpha,d}= \big \{   \omega \ \in \mathbb R^2 \big |  \   | \langle \omega,k \rangle |  \geq \frac{\alpha}{ |k|^{1+d}}, \
     \forall \ k \in \mathbb Z^2 \backslash \{ 0 \} \big \}.
 \end{equation}
\end{defi}
Notice also that when $d>0$ is fixed, the Diophantine property (for some $\alpha >0$) of $\Lambda$ is independent of selected angle-action coordinates, see \cite{Bost86}. If $\Lambda$ is $(\alpha,d)-$Diophantine, then its frequency must be irrational and we may have the result in Remark \ref{rem1}.

It is known that the set $D_{\alpha,d} $ is a closed set with closed half-line structure. When we take
$\alpha$ to be sufficiently small, it is a nowhere dense set but with no isolated points. Moreover, its
measure tends to large measure as $\alpha$ tends to $0$: the measure of its complement is of order
$\mathcal{O}(\alpha)$. See Refs. \cite{HB90}, and \cite{Poschel01}.
\begin{defi}  \label{dn bonnes valeurs}
        For some $\alpha>0 $ and some $d>0$, we define the set of good values associated with an energy level $E$, denoted by $\mathcal{G}(\alpha,d, E)$, obtained from $\cup_{a \in J}Q_\infty(a)$ by removing the following set of bad values $\mathcal{B}(\alpha,d, E)$:
\beas
\mathcal{B}(\alpha,d, E)& =  & \Bigg ( \bigcup_{dist(a,S) < \alpha} Q_\infty(a) \Bigg )
                                \bigcup \Bigg ( \bigcup_{a \in J\backslash S: \ \omega(\xi_a) \textrm{ is not }
                                    (\alpha,d)- \textrm{Diophantine}} Q_\infty(a)\Bigg )
                                          \\
                  &         &  \bigcup \Bigg( \bigcup_{a \in J\backslash S: \ |d \langle q \rangle(\xi_a) | < \alpha }
                                            Q_\infty(a) \Bigg )
 \bigcup \Bigg ( \bigcup_{a \in J\backslash S: \ |\omega'(\xi_a)| < \alpha } Q_\infty(a) \Bigg ) .
\eeas
\end{defi}

\begin{rema} \label{rem2}  We note that
         \begin{enumerate}[(i)] \label{peti mesure}
         \item When $d>0$ is kept fixed, the measure of the set of bad values
             $\mathcal{B}(\alpha,d, E)$ in $\cup_{a \in J}Q_\infty(a)$ (and in $\langle q
             \rangle_{\Lambda_a} (J) $) is $\mathcal O
             (\alpha)$, when $\alpha >0$ is small enough, provided that the measure of
                                    \begin{equation}  \label{a condition}
                                            \Bigg ( \bigcup_{a \in
                                             J\backslash S: \ \omega(\xi_a) \in \mathbb Q } Q_\infty(a)
                                             \Bigg ) \bigcup \Bigg ( \bigcup_{a \in S} Q_\infty(a)
                                            \Bigg )
                                    \end{equation}
             is sufficiently small, depending on $\alpha$ (see Ref. \cite{Hitrik07}).
             \item Let $G \in \mathcal{G}(\alpha,d, E) $ be a good value, then by Definition of
                 $\mathcal{B}(\alpha,d, E) $ and Remark \ref{rem1}, there are a finite number
                 of corresponding $(\alpha, d)-$Diophantine tori
                 $\Lambda_{a_1},\ldots,\Lambda_{a_L}$, with $L \in \mathbb N^*$ and $\{
                 a_1,\ldots,a_L\} \subset J\setminus S $, in the energy space $p^{-1}(E) \cap
                 T^*M$, such that the pre-image
                                         $$\langle q \rangle ^{-1}(G)= \{\Lambda_{a_1},\ldots, \Lambda_{a_L} \}.$$
             In this way, when $G $ varies in $\mathcal{G}(\alpha,d, E) $, we obtain a family of large measure
             of $(\alpha, d)-$Diophantine invariant tori  in the phase space
                 satisfying $ \{p= E, \langle q \rangle = G \}$.
         \end{enumerate}
\end{rema}
For $G \in \mathcal{G}(\alpha,d, E) $ is a good value, we define in the horizontal band of size
$\varepsilon$ of complex plan, given in \eqref{loca. spectre 2}, a suitable window of size
$\mathcal{O}(h^\delta) \times \mathcal{O}(\varepsilon h^\delta)$, around the \textit{good center $E+ i\varepsilon G$},
called a \textit{good rectangle},
 \begin{equation} \label{cua so}
                        R^{(E, G)}(\varepsilon,h)
                                    = (E+i \varepsilon \ G)+  \Big[-\frac{h^\delta}{\mathcal{O}(1)},\frac{h^\delta}{\mathcal{O}(1)} \Big]
                                    +i \varepsilon \Big [ -\frac{h^\delta}{\mathcal{O}(1)}, + \frac{h^\delta}{\mathcal{O}(1)} \Big ].
 \end{equation}
Now let $G \in  \mathcal{G}(\alpha,d, E)$ be a good value. As in Remark \ref{rem2}, there exists
$L$ elements in pre-image of $G$ by $\langle q \rangle$. We shall assume that $L=1$ and we write
    \begin{equation} \label{preimage}
     \langle q \rangle ^{-1}(G)=\Lambda_ a \subset  p^{-1}(E) \cap T^*M , \ a \in J\setminus S ,
    \end{equation}
Note that this hypothesis can be achieved if we assume that the function $(p, \langle q \rangle)$ is proper with connected fibres.


Let $\Lambda_a$, $ a  \in J\backslash S$ be an invariant Lagrangian torus and let $\kappa$ be
the action-angle local coordinates in \eqref{coor}. The fundamental cycles $(\gamma_1,\gamma_2)$ of
$\Lambda_a$ are defined by
            $$ \gamma_j= \kappa^{-1} ( \{ (x, \xi) \in T^* \mathbb{T}^2: x_j=0, \xi= \xi_a \}) , \ j=1,2. $$
Then we note $\eta \in \mathbb Z^2$ the Maslov indices and $S \in \mathbb R^2$ the action integrals of
these fundamental cycles,
\begin{equation} \label{act}
    S= (S_1, S_2) = \left( \int_{\gamma_1} \theta , \int_{\gamma_2} \theta  \right),
\end{equation}
where $\theta$ is the Liouville $1-$form on $T^*M$.

\begin{defi}[Refs. \cite{lectureColin}, and \cite{Rob93}]   \label{ind}
Let $ E $ be a symplectic space and let $ \Lambda (E) $ be his Lagrangian Grassmannian (which is
set of all Lagrangian subspaces of $ E $). We consider a bundle $ B $ in $ E $ over the circle or a
compact interval provided with a Lagrangian subbundle called vertical. Let $ \lambda (t) $ be a
section of $ \Lambda (B) $ which is transverse to the vertical edges of the interval in the case
where the base is an interval. The Maslov index of $ \lambda (t) $ is the intersection number of
this curve with the singular cycle of Lagrangians which do not cut transversely the vertical
subbundle.
\end{defi}
The Maslov index appears in the statement of spectral asymptotic results. In our work it is treated as a standard constant.


We recall here spectral asymptotic results for the standard case at the energy level
$E=0$, cited from \cite{Hitrik07}. However these results can be immediately generalized for any energy
lever $E$ by a translation.
\begin{theo}[\cite{Hitrik07}]  \label{theorem quasi-spectre}

    For $E=0$. We suppose that $P_\varepsilon$ is an operator satisfying
    Assumptions \ref{hypothese} (from \eqref{fm aa} to \eqref{w}), and in the regime $h \ll \varepsilon = \mathcal{O}(h^\delta)$ for $0< \delta <1 $.
  Let $G \in  \mathcal{G}(\alpha,d, 0)$ be a good value as Def. \ref{dn bonnes valeurs}, and assume that \eqref{preimage} is true. Suppose that the action-angle coordinates $\kappa$ in \eqref{coor} send $\Lambda_a$ to the zero section $ \mathbb T^2 \times \{\xi_a=0\} \in T^* \mathbb T^2$.
    Assume that $dp(\xi)$ and $d \langle q \rangle (\xi)$ are $\mathbb R-$linearly independent at $\xi_a=0$, where $p=p(\xi)$ as in \eqref{p} and $\langle q \rangle (\xi) $  is the average of $q$ on tori, given in (\ref{moyenne2}).

  Then the eigenvalues $\mu$ of $ P_ \varepsilon $
  with multiplicity in the good rectangle $  R^{(0, G)}(\varepsilon,h)$ of the form \eqref{cua so} have the following
  expression,
  \begin{equation} \label{eigenvalues}
  \mu= P ^{(\infty)} \Big( h(k-\frac{\eta}{4})-\frac{S}{2 \pi}, \varepsilon; h\Big) + \mathcal O(h^\infty),
                                        k \in \mathbb Z^2,
  \end{equation}
    where $P^{(\infty)}(\xi, \varepsilon; h )$ is a smooth function of $\xi$ defined in a neighborhood of $(0, \mathbb R^2)$
    and $\varepsilon, h$ in neighborhoods of $(0, \mathbb R)$.
    Moreover $P^{(\infty)}(\xi, \varepsilon; h )$ is real valued for $\varepsilon =0$, admits the following polynomial asymptotic
    expansion in $(\xi, \varepsilon, h)$ for the $C^\infty-$ topology:
    \begin{equation} \label{symbole normal}
        P^{(\infty)} ( \xi, \varepsilon; h) \sim    \sum_{\alpha, j,k} C_{\alpha j k} \ \xi^\alpha \varepsilon^ j h^k.
     \end{equation}
   In particular $P^{(\infty)}$ is classical in the space of symbols with $h-$leading term:
    \begin{equation} \label{prin normal}
            p_{0}^{(\infty)} (\xi, \varepsilon)= p(\xi)+ i \varepsilon \langle q \rangle (\xi) +
                                             \mathcal O(\varepsilon ^2).
    \end{equation}

\end{theo}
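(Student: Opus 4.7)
The plan is to reduce $P_\varepsilon$, microlocally near the Diophantine torus $\Lambda_a$, to a Birkhoff normal form whose full Weyl symbol depends only on the action variable $\xi$; the eigenvalues are then read off by letting the normal-form operator act on $\mathbb{T}^2$-exponentials, with the shifts by $S$ and $\eta$ imposed by the global quantization of the action-angle chart. First, using the chart $\kappa$ from Theorem~\ref{A-A}, I would quantize $\kappa$ as an $h$-Fourier integral operator $U$ defined microlocally in a neighborhood of $\Lambda_a$, so that $U^{-1}P_\varepsilon U$ is an $h$-pseudodifferential operator near the zero section of $T^*\mathbb{T}^2$ with principal symbol $p(\xi) + i\varepsilon q(x,\xi) + \mathcal{O}(\varepsilon^2)$. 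The existence of such a $U$ globally on the torus requires the Bohr--Sommerfeld condition twisted by Maslov, and the indices $\eta$ and the actions $S$ of the cycles $\gamma_1,\gamma_2$ enter precisely at this step; they account for the shift $h(k - \eta/4) - S/(2\pi)$ in \eqref{eigenvalues}.

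Next I would conjugate $U^{-1}P_\varepsilon U$ by $\exp(\tfrac{i}{h}G)$ with $G$ an $h$-pseudodifferential operator of size $\mathcal{O}(\varepsilon)$, whose leading symbol $g_1$ solves the cohomological equation $\{p, g_1\} = q - \langle q\rangle(\xi)$. Solvability in Fourier series is guaranteed by the Diophantine condition \eqref{dn alpha-d dioph}: the small-divisor loss $|k|^{1+d}/\alpha$ is absorbed by the exponential decay of the Fourier coefficients of the analytic symbol $q$. Iterating the procedure in powers of $(\xi,\varepsilon,h)$ yields a conjugated operator whose full Weyl symbol depends on $\xi$ alone modulo $\mathcal{O}((\xi,\varepsilon,h)^\infty)$; this is $P^{(\infty)}(\xi,\varepsilon;h)$, and the first averaging step produces the leading term $p(\xi) + i\varepsilon \langle q\rangle(\xi) + \mathcal{O}(\varepsilon^2)$, matching \eqref{prin normal}. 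The higher-order terms assemble into the polynomial expansion \eqref{symbole normal}.

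Finally, since $P^{(\infty)}(hD_x,\varepsilon;h)$ acts diagonally on the basis $\{e^{ik\cdot x}\}_{k\in\mathbb{Z}^2}$ of $L^2(\mathbb{T}^2)$, its eigenvalues there are $P^{(\infty)}(hk,\varepsilon;h)$; combining this with the shift from the first step gives exactly \eqref{eigenvalues}. To upgrade this to the spectrum of $P_\varepsilon$ in $R^{(0,G)}(\varepsilon,h)$, I would use a Grushin (effective Hamiltonian) reduction in the spirit of \cite{Hitrik07}: the good-value hypothesis \eqref{preimage} ensures that only the torus $\Lambda_a$ contributes, so microlocal invertibility of $P_\varepsilon - z$ away from $\Lambda_a$ absorbs the rest of $p^{-1}(0)$, and the regime $h \ll \varepsilon = \mathcal{O}(h^\delta)$ keeps the $\mathcal{O}(h^\infty)$ remainders well below the vertical lattice spacing $\varepsilon h^\delta$ of the rectangle.

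The main obstacle is the Birkhoff step: at each order one must control the small divisors and verify that the gain of $\varepsilon$ at each iteration dominates the Diophantine loss, which is possible thanks to the analyticity of symbols but requires careful bookkeeping inside the $h$-pseudodifferential calculus, in particular to make sure the successive conjugating operators remain bounded as $h,\varepsilon \to 0$. A secondary difficulty lies in the global isolation argument behind Grushin: one has to convert the classical statement that for $a$ away from the preimage of $G$ the long-time average $\langle q\rangle_T$ stays away from $G$ into an operator-level invertibility of $P_\varepsilon - z$ on the complement of a neighborhood of $\Lambda_a$, uniformly for $z \in R^{(0,G)}(\varepsilon,h)$; this is where the full machinery of \cite{Hitrik07} is needed and where any weakening of the good-value assumption would spoil the asymptotic \eqref{eigenvalues}.
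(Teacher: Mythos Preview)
The paper does not prove this theorem: it is quoted verbatim from \cite{Hitrik07} (note the citation in the theorem header) and no proof is supplied in the text. What you have written is a faithful outline of the strategy in \cite{Hitrik07}---quantize the action-angle chart as an $h$-FIO (introducing the Maslov/action shift), run an analytic Birkhoff normal-form iteration near the Diophantine torus by solving the cohomological equations $\{p,g\}=r-\langle r\rangle$ with small-divisor control from \eqref{dn alpha-d dioph}, and globalize via a Grushin problem using the good-value hypothesis to isolate $\Lambda_a$. Since the present paper offers no argument of its own to compare against, there is nothing to contrast; your sketch is consistent with the cited source and correctly identifies the two genuine difficulties (the small-divisor bookkeeping in the normal-form step and the microlocal invertibility away from $\Lambda_a$ needed for the Grushin reduction).
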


The fact that the horizontal spectral band is of size $\mathcal{O}(\varepsilon)$ suggests us
introducing the function
        \begin{eqnarray}   \label{chi}
            \chi :  \mathbb R^2 \ni u= (u_1,u_2) \mapsto \chi_u &=& (u_1, \varepsilon u_2) \in \mathbb R^2
            \\
            &\cong & u_1+i \varepsilon u_2 \in \mathbb C,     \nonumber
        \end{eqnarray}
in which we identify $\mathbb C$ with $\mathbb R^2$.

Noticing that $ \sigma(P_\varepsilon)= \sigma(P_\varepsilon - \chi_a  ) +\chi_a$ and applying Theorem \ref{theorem quasi-spectre}
(in the standard case) for the operator $(P_\varepsilon - \chi_a)$ with respect to the good rectangle $R^{(0, G)}(\varepsilon,h)$, we obtain easily the asymptotic eigenvalues of $P_\varepsilon$ in any good rectangle $R^{(E, G)}(\varepsilon,h)$, as following.
\begin{theo} \label{general quasi-spectre}
    For $E \in \R$. We suppose that $P_\varepsilon$ is an operator satisfying Assumptions \ref{hypothese} (from \eqref{fm aa} to \eqref{w}), and in the regime $h \ll \varepsilon = \mathcal{O}(h^\delta)$ for $0< \delta <1 $.
  Let $G \in  \mathcal{G}(\alpha,d, E)$ be a good value as Def. \ref{dn bonnes valeurs}, and assume that \eqref{preimage} is true. Suppose that by $\kappa$, given in \eqref{coor}, $\Lambda_a \simeq \Lambda_{\xi_a}$, with $\xi_a \in A$.
    Assume that $dp(\xi)$ and $d \langle q \rangle (\xi)$ are $\mathbb R-$linearly independent at $\xi_a$, where $p=p(\xi)$ as in \eqref{p} and $\langle q \rangle (\xi) $ as in (\ref{moyenne2}).

  Then the eigenvalues $\mu$ of $ P_ \varepsilon $
  with multiplicity in the good rectangle $  R^{(E, G)}(\varepsilon,h)$ of the form \eqref{cua so}, modulo $\mathcal O(h^\infty)$, are $h-$\textit{locally} given by
  \begin{equation} \label{eigenvalues-E}
  \mu= P ^{(\infty)} \Big( \xi_a+ h(k-\frac{\eta}{4})-\frac{S}{2 \pi},\varepsilon; h\Big) + \mathcal O(h^\infty),
                                        k \in \mathbb Z^2,
  \end{equation}
    where $P^{(\infty)}(\xi; \varepsilon, h )$ is a smooth function defined in a neighborhood of $(\xi_a, \mathbb R^2)$
    and of $\varepsilon, h$ in neighborhoods of $(0, \mathbb R)$.
    Moreover $P^{(\infty)}(\xi; \varepsilon, h )$ admits asymptotic expansions of the form \eqref{symbole normal}, with $h-$leading term of the form \eqref{prin normal}.
\end{theo}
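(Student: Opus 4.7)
The plan is to reduce the general theorem to the already-established standard case (Theorem \ref{theorem quasi-spectre} with $E = 0$) by a translation both in the spectral plane and in the action variables. First I would introduce the shifted operator $\widetilde{P}_\varepsilon := P_\varepsilon - E$. Subtracting the real constant $E$ does not affect any of the structural hypotheses from Section \ref{hypothese}: its total symbol is still classical of order $0$ with the same ellipticity condition at infinity, its principal symbol is $\widetilde{p}_\varepsilon = (p-E) + i \varepsilon q + \mathcal O(\varepsilon^2)$, the unperturbed part $\widetilde{p} = p - E$ has the same Hamiltonian flow as $p$, and $(\widetilde{p},f)$ is the same integrable system up to a constant. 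The energy level $E$ of $P_\varepsilon$ becomes the level $0$ of $\widetilde{P}_\varepsilon$, so $\widetilde{p}^{-1}(0) \cap T^*M = p^{-1}(E) \cap T^*M$ and the good value $G$ has preimage $\Lambda_a$ under $\langle \widetilde{q}\rangle = \langle q \rangle$; in particular $G$ remains a good value in the sense of Definition \ref{dn bonnes valeurs} at level $0$.

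Next, to match the standard theorem I would replace the given angle-action chart $\kappa$ by the shifted chart $\widetilde{\kappa} = \kappa - (0,\xi_a)$, which sends $\Lambda_a$ to the zero section $\mathbb{T}^2 \times \{0\}$. In these new coordinates $\widetilde{\xi} = \xi - \xi_a$ we have $\widetilde{p}(\widetilde{\xi}) = p(\widetilde{\xi}+\xi_a) - E$ and $\langle \widetilde{q}\rangle(\widetilde{\xi}) = \langle q\rangle(\widetilde{\xi}+\xi_a)$, so the $\mathbb R$-linear independence hypothesis of $dp$ and $d\langle q\rangle$ at $\xi_a$ is exactly the hypothesis of Theorem \ref{theorem quasi-spectre} at the origin. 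Since the translation acts only on $\xi$ and fixes the angle coordinate, the fundamental cycles $\gamma_j$ of $\Lambda_a$, and hence the Maslov indices $\eta$ and action integrals $S$, coincide with those in the statement being proved.

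Applying Theorem \ref{theorem quasi-spectre} to $\widetilde{P}_\varepsilon$ in the shifted chart, for the good rectangle $R^{(0,G)}(\varepsilon,h) = R^{(E,G)}(\varepsilon,h) - E$, yields each eigenvalue $\widetilde{\mu} \in \sigma(\widetilde{P}_\varepsilon) \cap R^{(0,G)}(\varepsilon,h)$ in the form
\begin{equation*}
\widetilde{\mu} = \widetilde{P}^{(\infty)}\!\left(h\left(k - \tfrac{\eta}{4}\right) - \tfrac{S}{2\pi},\, \varepsilon;\, h\right) + \mathcal O(h^\infty),\quad k \in \mathbb Z^2,
\end{equation*}
with $\widetilde{P}^{(\infty)}$ enjoying the expansion \eqref{symbole normal} and $h$-leading term $\widetilde{p}(\widetilde{\xi}) + i\varepsilon \langle \widetilde{q}\rangle(\widetilde{\xi}) + \mathcal O(\varepsilon^2)$. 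Using $\sigma(P_\varepsilon) = \sigma(\widetilde{P}_\varepsilon) + E$ and setting
\begin{equation*}
P^{(\infty)}(\xi,\varepsilon;h) := \widetilde{P}^{(\infty)}(\xi - \xi_a,\varepsilon;h) + E,
\end{equation*}
each $\mu \in \sigma(P_\varepsilon) \cap R^{(E,G)}(\varepsilon,h)$ takes the form \eqref{eigenvalues-E}; the asymptotic expansion \eqref{symbole normal} is preserved under this affine change, and the substitutions $\widetilde{p}(\xi - \xi_a) + E = p(\xi)$ and $\langle \widetilde{q}\rangle(\xi - \xi_a) = \langle q\rangle(\xi)$ recover the principal symbol \eqref{prin normal}.

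I do not expect any genuinely hard step; the main point requiring a careful check is that $S$ and $\eta$ are truly unchanged by the translation of action variables, which holds because the fundamental cycles are defined on $\Lambda_a$ itself and only the $\xi$-coordinate is shifted, leaving the $1$-form $\theta$ and the Maslov cocycle untouched on the cycles.
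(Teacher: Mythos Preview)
Your proposal is correct and follows essentially the same translation argument the paper sketches in the sentence preceding Theorem \ref{general quasi-spectre}: reduce to the standard case $E=0$ by shifting the operator and applying Theorem \ref{theorem quasi-spectre}. The paper phrases the shift as subtracting $\chi_a$ (i.e.\ $E+i\varepsilon G$) rather than just $E$, but this is a cosmetic difference; your version, together with the explicit translation $\xi\mapsto\xi-\xi_a$ in the action variables, makes transparent how the extra $\xi_a$ enters formula \eqref{eigenvalues-E} and why $S$ and $\eta$ are unchanged, which the paper leaves implicit.
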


\begin{rema} \label{tsymb}
We can write the total symbol $ P^{(\infty)}$ in the reduce form:
     \begin{equation}
         P^{(\infty)} ( \xi, \varepsilon; h) =  p(\xi)+ i \varepsilon \langle q \rangle (\xi) +   \mathcal O(\varepsilon ^2)+ \mathcal O(h),
    \end{equation}
uniformly for $\varepsilon$ and $h$ small.
\end{rema}

\begin{rema} \label{diffeo}
We can show that the function
$P^{(\infty)}$ in the above theorem is a local diffeomorphism from a neighborhood of $(\xi_a, \mathbb R^2)$ into its image, that is in a $\mathcal O(\varepsilon)-$ horizontal band (see Ref. \cite{QS14}).
Therefore the eigenvalues of $P_\varepsilon$ in a good rectangle form a deformed $h-$\textit{lattice}. It's the image of a square lattice of $h \mathbb Z^2$ by a local diffeomorphism.
Moreover, we can show that the lattice has a horizontal spacing $ \mathcal O(h)$
and a vertical spacing $ \mathcal O(\varepsilon h)$.
\end{rema}




\section{Spectral asymptotic pseudo-lattice and its monodromy}   \label{sp-lattice Sec}

In the previous section we have seen that the spectrum locally is a deformed $h-$ lattice. In this section we are studying globally the spectrum in showing that the spectrum is an asymptotic pseudo-lattice. Then we can define a monodromy for the spectrum.

\subsection{Monodromy of an asymptotic pseudo-lattice} \label{mapl}
We recall here the definition of an asymptotic pseudo-lattice and its monodromy given in \cite{QS14}. This is a discrete subset of $\mathbb R^2$ admitting a particular property.

For any subset $U$ of $\mathbb R^2$ we denote
\beq \label{elp}
 \ U(\varepsilon)=  \chi(U),
\eeq
where $\chi$ is the map in \eqref{chi}.
\begin{defi} \label{pseu-lattice}
     Let $U$ be an open subset of $\mathbb R^2$ with compact closure and
     let $\Sigma(\varepsilon, h)$ (which depends on small $h$ and $\varepsilon$) be a discrete subset of $U(\varepsilon)$.
     For $h, \varepsilon$ small enough and in the regime $ h \ll \varepsilon$, we say that $(\Sigma(\varepsilon, h), U(\varepsilon))$
     is an asymptotic pseudo-lattice if:
         for any small parameter $\alpha >0$, there exists a set of good values in $\mathbb R^2$, denoted by $\mathcal{G}(\alpha)$, whose complement is of size $\mathcal O(\alpha)$ in the sense:
             $$\mid {}^C \mathcal{G} (\alpha) \cap I  \mid \leq C \alpha \mid I \mid, $$
with a constant $C>0$ for any domain $I \subset \mathbb R^2$; for all $c \in U$, there exists a small open subset $U^c \subset U $ around $c$ such that for every good value $a \in U^c  \cap \mathcal{G}(\alpha)$, there is an adapted good rectangle $R^{(a)}(\varepsilon,h) \subset U^c( \varepsilon)$ of the form \eqref{cua so} (with $a=(E,G)$), and a smooth local diffeomorphism $f= f(\cdot, \varepsilon;h)$ which sends $R^{(a)}(\varepsilon,h)$ on its image, satisfying
\beq  \label{semi-cart}
    \Sigma( \varepsilon, h) \cap R^{(a)}(\varepsilon,h) \ni  \mu \mapsto  f(\mu, \varepsilon; h) \in h \mathbb Z^2 +\mathcal O(h^\infty).                        \eeq
Moreover, the function $\widetilde{f}:= f \circ \chi$, with $\chi$ defined by \eqref{chi}, admits an asymptotic expansion in
$(\varepsilon,\frac{h}{\varepsilon})$ for the $C ^\infty-$topology in a neighborhood of $a$,
uniformly with respect to the parameters $h$ and $\varepsilon$, such that its leading term $\widetilde{f}_0$
is a diffeomorphism, independent of $\alpha$, locally defined on whole $U^c $ and independent
of the selected good values $a \in U^c $.

We also say that the couple $(f(\cdot, \varepsilon;h), R^{(a)}(\varepsilon,h) )$ is a $h-$chart of $\Sigma( \varepsilon, h)$,
and the family of $h-$charts $(f(\cdot, \varepsilon;h), R^{(a)}(\varepsilon,h) )$, with all $a
\in U^c \cap \mathcal{G}(\alpha) $, is a local pseudo-chart on $U^c( \varepsilon)$ of
$(\Sigma(\varepsilon, h), U(\varepsilon))$.
\end{defi}

\begin{rema}
A standard lattice is obviously an asymptotic pseudo-lattice.
An another example is a known lattice with some similar but lighter properties, named asymptotic lattice, given in \cite{Vu-Ngoc99}.
That lattice is modeled on the joint spectrum of system of commuting operators. It is locally defined, while the asymptotic pseudo-lattice is very delicate, it is $h-$locally defined.

The introduction of this discrete lattice aims to show that the combinatorial invariant that we will
define is directly built from the spectrum of operators. If different operators have the same
spectrum, then they have the same monodromy.
\end{rema}


Let $\{ U^j \}_{j \in \mathcal{J} }$, here $\mathcal{J}$ is a finite index set, be an arbitrary (small enough) locally finite covering of $U$. Then the asymptotic pseudo-lattice $(\Sigma(\varepsilon, h), U(\varepsilon))$ can be covered by associated local pseudo-charts $ \{ \left(  f_ j (\cdot, \varepsilon; h),  U^j (\varepsilon) \right) \}_{j \in \mathcal{J} }$. Note from Definition \ref{pseu-lattice} that the leading terms $ \widetilde{f}_{j,0}(\cdot, \varepsilon; h)$ are well defined on whole $ U^j$ and we can see them as
the charts of $U$. Analyzing transition maps, we had the following result.
\begin{prop} [\cite{QS14}]   \label{dl}
 On each nonempty intersection $  U^i \cap U^j \neq \emptyset$, $i, j \in \mathcal{J}$, there exists an unique integer linear map $M_{ij} \in GL(2,
\mathbb Z)$ (independent of $h, \varepsilon$) such that:
\begin{equation} \label{transition-pseu}
d \big ( \widetilde{f}_{i, 0} \circ (\widetilde{f}_{j, 0})^{-1} \big )= M_{ij}.
\end{equation}
\end{prop}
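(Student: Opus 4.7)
The plan is to reduce the statement to a \emph{lattice-preservation} estimate applied to the transition map between the two $h$-charts, and then to pass to the leading term of the two-scale asymptotic expansion. Pick any point $c \in U^i \cap U^j$. By Definition \ref{pseu-lattice}, the set of good values $\mathcal{G}(\alpha)$ has complement of size $\mathcal{O}(\alpha)$, so I may select good values $a \in U^i \cap U^j \cap \mathcal{G}(\alpha)$ accumulating at $c$; for each such $a$ both $(f_i, R^{(a)}(\varepsilon, h))$ and $(f_j, R^{(a)}(\varepsilon, h))$ are $h$-charts of $\Sigma(\varepsilon, h)$. Set
\[
T_{ij}(\cdot, \varepsilon; h) := f_i\bigl(f_j^{-1}(\cdot, \varepsilon; h), \varepsilon; h\bigr).
\]
Since $\widetilde{f}_k = f_k \circ \chi$ yields $\widetilde{f}_j^{-1} = \chi^{-1} \circ f_j^{-1}$, I get $\widetilde{f}_i \circ \widetilde{f}_j^{-1} = T_{ij}$; in particular the leading term of $T_{ij}$ in the $(\varepsilon, h/\varepsilon)$-expansion is precisely $\widetilde{f}_{i,0} \circ \widetilde{f}_{j,0}^{-1}$.

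The heart of the argument is a finite-difference observation. Both $f_i$ and $f_j$ send $\Sigma(\varepsilon, h) \cap R^{(a)}(\varepsilon, h)$ into $h\mathbb{Z}^2 + \mathcal{O}(h^\infty)$, so $T_{ij}$ carries the image lattice points into the image lattice points modulo $\mathcal{O}(h^\infty)$. The good rectangle has size $\mathcal{O}(h^\delta)\times\mathcal{O}(\varepsilon h^\delta)$, hence contains $\asymp h^{2(\delta-1)}\to\infty$ lattice points in the image coordinates. For such a lattice point $p$ and $\nu \in \mathbb{Z}^2$ with $|\nu| = 1$, a Taylor expansion gives
\[
T_{ij}(p+h\nu, \varepsilon; h) - T_{ij}(p,\varepsilon;h) = h\, dT_{ij}(p)\cdot\nu + \mathcal{O}(h^2),
\]
whose left-hand side lies in $h\mathbb{Z}^2 + \mathcal{O}(h^\infty)$. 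Dividing by $h$ and taking $\nu = e_1, e_2$, I deduce $dT_{ij}(p) \in M_2(\mathbb{Z}) + \mathcal{O}(h)$ at every such lattice point.

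Next I pass to the leading term. By Definition \ref{pseu-lattice} the maps $\widetilde{f}_i$ and $\widetilde{f}_j$ admit $C^\infty$ expansions in $(\varepsilon, h/\varepsilon)$; standard calculus on formal series shows these are preserved by inversion and composition, so
\[
dT_{ij}(p) = d\bigl(\widetilde{f}_{i,0}\circ\widetilde{f}_{j,0}^{-1}\bigr)(p) + \mathcal{O}\bigl(\varepsilon, h/\varepsilon\bigr),
\]
with the leading term defined on all of $U^i\cap U^j$, independently of $a$ and of $(h,\varepsilon)$. Combining with the previous estimate and letting the small parameters tend to zero along the regime $h\ll\varepsilon$ forces $d(\widetilde{f}_{i,0}\circ\widetilde{f}_{j,0}^{-1})(p) \in M_2(\mathbb{Z})$. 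As $\alpha\downarrow 0$ and $a$ varies, such base points $p$ are dense in $U^i\cap U^j$, and since the differential of $\widetilde{f}_{i,0}\circ\widetilde{f}_{j,0}^{-1}$ is continuous with values in the discrete set $M_2(\mathbb{Z})$ it is locally constant, equal to one matrix $M_{ij}$ on each connected component of the intersection (refining the covering if necessary to make the intersection connected). Invertibility of the diffeomorphism forces $M_{ij}\in GL(2,\mathbb{Z})$; uniqueness is immediate.

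The main obstacle I expect is the bookkeeping around the two-scale expansion: one must verify that the $C^\infty$ expansion of $\widetilde{f}_i$ and $\widetilde{f}_j$ in Definition \ref{pseu-lattice} is preserved under inversion and composition with $C^1$ control of the remainders, uniformly on rectangles of size $h^\delta$ that shrink with $h$. Only such uniform control lets the finite-difference estimate be combined with the leading-term limit; without it, the integrality condition on $dT_{ij}$ would not descend to its $(\varepsilon, h/\varepsilon)$-independent leading term.
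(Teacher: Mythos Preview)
The paper does not prove this proposition; it is quoted from \cite{QS14} and stated without argument. Your reconstruction is the standard lattice-preservation argument and is essentially what is carried out in \cite{QS14} (and, in the simpler asymptotic-lattice setting, in \cite{Vu-Ngoc99}): compare two $h$-charts on a common good rectangle, use the finite-difference/Taylor step to force $dT_{ij}\in M_2(\mathbb Z)+\mathcal O(h)$, pass to the $(\varepsilon,h/\varepsilon)$-leading term, and invoke continuity plus discreteness to get a locally constant matrix in $GL(2,\mathbb Z)$. One small slip: the composition $\widetilde f_{i,0}\circ\widetilde f_{j,0}^{-1}$ is naturally defined on $\widetilde f_{j,0}(U^i\cap U^j)$, not on $U^i\cap U^j$ itself; the conclusion is more cleanly phrased as $d\widetilde f_{i,0}=M_{ij}\,d\widetilde f_{j,0}$ on $U^i\cap U^j$, which is how the paper uses it afterwards. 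The technical point you flag (stability of the two-scale expansion under inversion and composition with uniform $C^1$ remainders) is exactly what \cite{QS14} handles via the analogue of Proposition~\ref{D.A inverse}, and is guaranteed by the hypothesis in Definition~\ref{pseu-lattice} that the expansions hold in the $C^\infty$ topology uniformly in the parameters.
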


Then we define the (linear) monodromy of the asymptotic pseudo-lattice $(\Sigma(\varepsilon, h), U(\varepsilon))$ as the $1$-cocycle of $\{M_{ij} \} $, modulo coboundary, in the \v{C}ech cohomology of $U$ with values in the integer linear group $GL(2, \mathbb Z)$,
denoted by
         \begin{equation} \label{mono}
             [\mathcal M] \in \check{H}^1(U,GL(2, \mathbb Z) ).
         \end{equation}
It does n't depend on the selected finite covering $\{ U^j \}_{j \in \mathcal{J} }$ and the small parameters $h, \varepsilon$.

We can also associate the class $[\mathcal M]$ with its holonomy, that is a group morphism from the fundamental group $\pi_1(U)$ to the group $ GL(2, \mathbb Z)$, modulo conjugation. The triviality of $[\mathcal M]$ is equivalent to the one of its holonomy.



\subsection{Spectral monodromy of $P_\varepsilon$}
Now let $F=(p,f)$ be an integrable system as in \eqref{F} and denote $U_r$ the set of all regular values of $F$.
We recall that the space of regular leaves of $F$ is foliated by Liouville invariant tori by Theorem \ref{A-A}.

\begin{theo} \label{mtheo1}
     Let $U$ be an open subset of $U_r$ with compact closure.
     In the regime $h \ll \varepsilon = \mathcal{O}(h^\delta)$ for $0< \delta <1 $, we suppose that $P_\varepsilon$ is an operator satisfying the assumptions of Sec. \ref{hypothese} (from \eqref{fm aa} to \eqref{w}) for any energy level $E$ in the projection of $U$ on horizontal axis.
     For any Liouville torus $\Lambda \subset F^{-1} (U) $ such that $\Lambda \simeq \Lambda_{\xi}$ in a local angle-action coordinate $\kappa=(x,\xi)$ as in \eqref{coor}, we assume that $dp(\xi)$ and $d \langle q \rangle (\xi)$ are $\mathbb R-$linearly independent, where $p=p(\xi)$ as in \eqref{p} and $\langle q \rangle (\xi) $ as in (\ref{moyenne2}) are expressions of $p$ and $\langle q \rangle$ in the local angle-action variables. Moreover we assume that the map $(p, \langle q \rangle )$ on $F^{-1} (U) $ is proper with connected fibers.
     Then $(\sigma(P_\varepsilon), U(\varepsilon) )$ is an asymptotic pseudo-lattice.
\end{theo}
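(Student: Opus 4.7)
The strategy is to use the properness and connected-fiber hypothesis on $(p,\langle q\rangle)$ to produce, near each $c\in U$, a single action-angle frame that is simultaneously valid for all nearby good values, and then to read off $h$-local charts of the spectrum from the Bohr--Sommerfeld description of Theorem \ref{general quasi-spectre}, arranging that their leading terms coincide on a whole neighborhood of $c$.

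Fix $c\in U\subset U_r$. Since $(p,\langle q\rangle)$ is proper with connected fibers on $F^{-1}(U)$, the level set $\{p=c_1,\ \langle q\rangle=c_2\}$ is a single compact connected submanifold; it is Lagrangian because $\langle q\rangle$ is a function of $\xi$ only (hence $\{p,\langle q\rangle\}=0$) and $dp,d\langle q\rangle$ are $\mathbb R$-linearly independent, so it is a Liouville torus $\Lambda_c$. Theorem \ref{A-A} applied to the integrable system $(p,\langle q\rangle)$ then furnishes an action-angle chart $\kappa^c=(x,\xi)\colon V^c\to\mathbb T^2\times A^c$ on which $p$ and $\langle q\rangle$ depend on $\xi$ only, so that $\psi^c:=(p,\langle q\rangle)^{-1}$ is a smooth diffeomorphism from a neighborhood $U^c$ of $c$ onto a neighborhood of $\xi^c_c$ in $A^c$. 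Shrink $U^c$ so that for every $a\in U^c$ the torus $\Lambda_a$ still lies in $V^c$; in this chart $\xi^c_a=\psi^c(a)$ varies smoothly with $a$, the Maslov index $\eta$ is locally constant, and the fundamental cycles $\gamma_j(a)$ of \eqref{act} depend continuously on $a$.

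Define the set of good values on $U$ by $\mathcal G(\alpha):=\{(E,G)\in U: G\in\mathcal G(\alpha,d,E)\}$; Remark \ref{rem2}(i) together with Fubini yields the estimate $|{}^C\mathcal G(\alpha)\cap I|=\mathcal O(\alpha)|I|$ on bounded domains $I\subset U$, as required by Definition \ref{pseu-lattice}. For $a\in U^c\cap\mathcal G(\alpha)$ the connectedness of fibers of $(p,\langle q\rangle)$ forces $L=1$ in \eqref{preimage}, so Theorem \ref{general quasi-spectre} applies and gives the Bohr--Sommerfeld formula \eqref{eigenvalues-E} in the good rectangle $R^{(a)}(\varepsilon,h)$. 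Invoking the local invertibility of $P^{(\infty)}(\cdot,\varepsilon;h)$ from Remark \ref{diffeo}, set
\begin{equation*}
f_a(\mu,\varepsilon;h):=(P^{(\infty)})^{-1}(\mu,\varepsilon;h)-\xi^c_a+\frac{S(a)}{2\pi}+\frac{h\eta}{4}.
\end{equation*}
This is a local diffeomorphism from $R^{(a)}(\varepsilon,h)$ onto its image which, by \eqref{eigenvalues-E}, sends $\sigma(P_\varepsilon)\cap R^{(a)}$ into $h\mathbb Z^2+\mathcal O(h^\infty)$.

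It remains to verify that $\widetilde f_a:=f_a\circ\chi$ admits an asymptotic expansion in $(\varepsilon,h/\varepsilon)$ whose leading term $\widetilde f_{a,0}$ is a diffeomorphism on all of $U^c$ independent of $a$. Using Remark \ref{tsymb} and separating real and imaginary parts of $P^{(\infty)}(\xi,\varepsilon;h)=u_1+i\varepsilon u_2$, one gets $\xi=\psi^c(u)+\mathcal O(\varepsilon,h/\varepsilon)$, whence
\begin{equation*}
\widetilde f_a(u,\varepsilon;h)=\psi^c(u)-\xi^c_a+\frac{S(a)}{2\pi}+\mathcal O\bigl(\varepsilon,\tfrac{h}{\varepsilon}\bigr).
\end{equation*}
The decisive step, which I expect to be the main obstacle, is to show that $S(a)/(2\pi)-\xi^c_a$ is independent of $a$ in the fixed chart $\kappa^c$: the 1-form $(\kappa^c)^*(\xi\,dx)-\theta$ is closed on $V^c$ (both are primitives of $\sigma$), its period along $\gamma_j(a)$ equals $2\pi\xi^c_{a,j}-S_j(a)$, and because the cycles $\gamma_j(a)$ form a continuous, hence homologous, family in $V^c$, this period is constant in $a$. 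Therefore $\widetilde f_{a,0}=\psi^c+\mathrm{const}$ is a diffeomorphism on $U^c$ independent of $a$ and of $\alpha$, which completes the verification of Definition \ref{pseu-lattice}.
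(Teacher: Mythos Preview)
Your proof follows essentially the same route as the paper's: choose action-angle coordinates for the integrable system $(p,\langle q\rangle)$ near $\Lambda_c$, apply Theorem~\ref{general quasi-spectre} at each good value $a\in U^c$, invert $P^{(\infty)}$ to produce the $h$-chart $f_a$, and verify that the leading term of $\widetilde f_a$ is $\psi^c+\text{const}$, independent of $a$. Your self-contained argument for the constancy of $S(a)/(2\pi)-\xi^c_a$ via the closed $1$-form $(\kappa^c)^*(\xi\,dx)-\theta$ is exactly the classical fact the paper imports from \cite{QS14} as equation~\eqref{pt action}, and your Fubini remark for the measure estimate is a point the paper leaves implicit.

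The one place where the paper is more careful than your write-up is the step ``$\xi=\psi^c(u)+\mathcal O(\varepsilon,h/\varepsilon)$''. Definition~\ref{pseu-lattice} demands that $\widetilde f_a$ admit a \emph{full} asymptotic expansion in $(\varepsilon,h/\varepsilon)$ in the $C^\infty$-topology, not merely a leading term with remainder bound. The paper handles this by rewriting $\widehat P:=\chi^{-1}\circ P^{(\infty)}$, observing that it has a genuine asymptotic expansion in $(\xi,\varepsilon,h/\varepsilon)$ with invertible leading term $\widehat P_0=\varphi$, and then invoking Proposition~\ref{D.A inverse} to transfer the full expansion to the inverse. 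Your sketch identifies the correct leading term but does not justify the existence of the higher-order terms of the expansion; inserting a reference to Proposition~\ref{D.A inverse} (or reproving it) closes this gap.
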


\begin{proof}[Proof of Theorem \ref{mtheo1}]

Let an any point $c \in U$. There exists uniquely a Liouville torus $\Lambda_c= (p, \langle q \rangle )^{-1}(c)$ in $T^*M$.
We note that $p$ and $\langle q \rangle$ commute in neighborhood of each Liouville torus, due to the fact that $\langle q \rangle $ is invariant under the flow of $p$.
Therefore we can use a local action-angle coordinate in a neighborhood $V^c \subset T^*M$ of $\Lambda_c$ as in Theorem \ref{A-A} with $F=(p, \langle q \rangle)$ and $\varphi(\xi)= (p(\xi), \langle q \rangle (\xi)), \ \xi \in A^c$.

Now, for any point $a=(E,G) $ in a small enough neighborhood $U^c $ such that $G$ is a good value, i.e., $G \in \mathcal{G}(\alpha,d,
E)$, see Definition \ref{dn bonnes valeurs}. We notice also that with assumptions of the theorem, the condition \eqref{preimage} is valid ($L=1$). So the corresponding Liouville torus $\Lambda_a= F^{-1}(a)$ is $(\alpha,d)-$Diophantine, as discussed in Remark \ref{rem2}.
Suppose that $\Lambda_a \simeq \Lambda_{\xi_a}$, with $\xi_a= \varphi^{-1}(a) \in A^c$.
Then with the help of Theorem \ref{general quasi-spectre} the eigenvalues $\mu$ of $ P_ \varepsilon $ in the good rectangle $  R^{(E, G)}(\varepsilon,h)$ of the form \eqref{cua so}, are $h-$\textit{locally} given by \eqref{eigenvalues-E}.

On the other hand, we have a classical result that for any $a \in U^c$,
 \begin{equation}  \label{pt action}
                             \frac{S}{2 \pi}- \xi_a := \tau_c \in \mathbb R^2,
 \end{equation}
 is locally constant in $c \in U_r$ (see Ref. \cite{QS14}).

Then from Remark \ref{diffeo}, the Eq. \eqref{eigenvalues-E} provides a smooth local diffeomorphism at $E+i \varepsilon G \in \mathbb C$,
denoted by $f= f(\mu, \varepsilon; h)$, that sends $\mu$ to $hk \in h \mathbb Z^2 $, modulo $\mathcal O(h^\infty)$, of the form
 \begin{eqnarray}  \label{new hk}
                 f= f(\mu, \varepsilon;h) & =&  \frac{S}{2 \pi}- \xi_a + h \frac{\eta}{4}+ P^{-1}(\mu)
                  \\
                      &= & \tau_c + h \frac{\eta}{4}+ P^{-1}(\mu)
                       \nonumber \\
                \sigma(P_\varepsilon) \cap R^{(E, G)}(\varepsilon,h) \ni  \mu & \mapsto &
                        f(\mu,\varepsilon; h) \in h \mathbb Z^2 +\mathcal O(h^\infty),  \nonumber
 \end{eqnarray}
 here $P$ is the map $P ^{(\infty)}$ in Theorem \ref{eigenvalues-E}, for ease of notation.

We shall show that this map should be a $h-$chart of $\sigma(P_\varepsilon)$ on the good rectangle $R^{(E, G)}(\varepsilon,h)$.

Let $\widetilde{f}= f \circ \chi$, then we have
\begin{equation}    \label{f tilde}
\widetilde{f}=  \tau_c + h \frac{\eta}{4}+ P^{-1} \circ \chi .
\end{equation}
To analysis $P^{-1} \circ \chi$, we first consider its inverse, $\widehat{P}:= \chi^{-1} \circ
P$. It is obtained from $P$ by dividing the imaginary part of $P$ by $\varepsilon$. As $P$ admits
an asymptotic expansion in $(\xi, \varepsilon,h)$, so it is true that $\widehat{P}$ admits an
asymptotic expansion in $(\xi, \varepsilon,\frac{h}{\varepsilon})$ (here $h \ll \varepsilon $).
Moreover, from Remark \ref{tsymb} we can write $\widehat{P}$ in the reduce form:
\begin{eqnarray}
\widehat{P}(\xi, \varepsilon;   h) & = &  \widehat{P}_0(\xi)+ \mathcal O(\varepsilon) + \mathcal O(\frac{h}{\varepsilon})
                       \nonumber \\
                            & = & \widehat{P}_0(\xi) + \mathcal O(\varepsilon, \frac{h}{\varepsilon}),
\end{eqnarray}
uniformly for $h, \varepsilon $ small and $h \ll \varepsilon $, with $\widehat{P}_0(\xi)=
p(\xi)+ i \langle q \rangle (\xi) \cong \varphi(\xi)$.

The Proposition \ref{D.A inverse} (as below) ensures that the map $\widehat{P}^{-1}= P^{-1} \circ \chi$ admits an asymptotic expansion in $(\varepsilon,\frac{h}{\varepsilon})$ whose first term is $(\widehat{P}_0)^{-1}=  \varphi ^{-1}$.
Hence the map $\widetilde{f}$ in \eqref{f tilde} admits an asymptotic expansion in
$(\varepsilon, \frac{h}{\varepsilon})$ with the leading term
            \begin{equation}  \label{pre terme f tilde}
                 \widetilde{f}_0= \tau_c+  \varphi^{-1}.
            \end{equation}
It is clear that the leading term $\widetilde{f}_0$ is a local diffeomorphism, defined on
$U^c$. It does not depend on the selected good rectangle in $U^c ( \varepsilon )$.
So $(f,R^{(E, G)}(\varepsilon,h))$ is a $h-$chart of $\sigma(P_\varepsilon)$ on $U^c(\varepsilon )$.

On the other hand, in the domain $U^c(\varepsilon )$, there are many good rectangles, due to the fact that the set of good values is of large measure, see Remark \ref{rem2}. A family of such as these $h-$charts with common leading term satisfying \eqref{pre terme f tilde} forms a local pseudo-chart for $\sigma(P_\varepsilon)$ on $U^c(\varepsilon )$.

The above construction ensures that $(\sigma(P_\varepsilon), U(\varepsilon) )$ is an asymptotic pseudo-lattice.
\end{proof}


With the help of Theorem \ref{mtheo1} and the construction in Sec. \ref{mapl} we may now give the definition of the monodromy for operators as following.
\begin{defi} \label{monodromie spectrale}
For $\varepsilon, h > 0$ sufficiently small such that $h \ll \varepsilon = h^\delta, 0< \delta< 1$, we suppose that the domain $U$ and the operator $P_\varepsilon$ satisfies all assumptions of Theorem \ref{mtheo1}. Then we define the spectral monodromy of $P_\varepsilon$ on the domain $U(\varepsilon)$, denoted by $[\mathcal M _{sp}(P_\varepsilon)]$, as the monodromy of the asymptotic pseudo-lattice $(\sigma(P_\varepsilon), U(\varepsilon) )$, given in \eqref{mono}.
\end{defi}


\begin{prop}(see \cite{QS14})  \label{D.A inverse}
            Let $\widehat{P}= \widehat{P}(\xi; X)$ be a complex-valued smooth function of $\xi$
            near $0 \in \mathbb R^2$ and $X$ near $0 \in \mathbb R^n$.
            Assume that $\widehat{P}$ admits an asymptotic expansion in $X$ near $0$ of the form
                $$\widehat{P}(\xi; X) \sim  \sum_\alpha C_\alpha(\xi) X^\alpha,$$
            with $C_\alpha(\xi)$ are smooth functions and $C_0(\xi):=\widehat{P}_0(\xi) $ is a local diffeomorphism near $\xi=0$.

            Then, for $\mid X \mid $ small enough, $\widehat{P}$ is also a smooth local diffeomorphism near $\xi=0$ and its inverse admits an asymptotic expansion in $X$ near $0 $ whose the first term is
            $(\widehat{P}_0)^{-1}$.
\end{prop}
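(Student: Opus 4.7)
The plan is to combine the inverse function theorem with parameters with a formal recursive construction of the coefficients of the expansion of the inverse. First I would establish that $\widehat{P}(\cdot;X)$ is genuinely a local diffeomorphism near $\xi=0$ for $|X|$ small. Differentiating the asymptotic expansion in the $C^\infty$ sense gives
\[
d_\xi \widehat{P}(\xi; X) = d\widehat{P}_0(\xi) + \mathcal{O}(|X|)
\]
uniformly on a neighborhood of $\xi=0$. Since $d\widehat{P}_0(0)$ is invertible by hypothesis, continuity yields invertibility of $d_\xi \widehat{P}(\xi;X)$ on a joint neighborhood of $(0,0)$, and the smooth inverse function theorem with parameters produces a smooth local inverse $\widehat{P}^{-1}(y;X)$ on a neighborhood of $(\widehat{P}_0(0),0)$.

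The next step is to build the expansion of $\widehat{P}^{-1}$ term by term. Writing the Ansatz $\widehat{P}^{-1}(y;X) \sim \sum_\alpha D_\alpha(y) X^\alpha$ with $D_0(y) := \widehat{P}_0^{-1}(y) =: \psi_0(y)$, I would substitute into the identity $\widehat{P}(\widehat{P}^{-1}(y;X);X) = y$, expand $\widehat{P}$ in a Taylor series in $\xi$ around $\psi_0(y)$ together with its asymptotic expansion in $X$, and collect powers of $X$. The coefficient of $X^0$ reproduces $\widehat{P}_0(\psi_0(y)) = y$, while for $|\alpha| \geq 1$ the matching condition takes the form
\[
d\widehat{P}_0(\psi_0(y))\,D_\alpha(y) = -R_\alpha(y),
\]
with $R_\alpha$ a smooth expression polynomial in the previously constructed $D_\beta$ ($|\beta|<|\alpha|$) and in the known $C_\beta$ ($|\beta|\leq|\alpha|$). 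Invertibility of $d\widehat{P}_0(\psi_0(y))$ determines $D_\alpha$ uniquely and smoothly; an induction on $|\alpha|$ delivers the entire sequence of coefficients.

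Finally I would verify that the formal series is truly an asymptotic expansion of the genuine inverse. For each $N$, set $S_N(y;X) := \sum_{|\alpha|<N} D_\alpha(y) X^\alpha$. By construction $\widehat{P}(S_N(y;X);X) - y = \mathcal{O}(|X|^N)$. Applying $\widehat{P}^{-1}(\cdot;X)$ and using the uniform bound on $d_y \widehat{P}^{-1}$ coming from the inverse function theorem yields $\widehat{P}^{-1}(y;X) - S_N(y;X) = \mathcal{O}(|X|^N)$. The analogous estimate for derivatives of any order in $y$ follows by differentiating the identity $\widehat{P}\circ\widehat{P}^{-1}=\mathrm{id}$ and proceeding inductively on the order of derivation, so the expansion holds in the $C^\infty$ topology.

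The only real obstacle is the bookkeeping in the recursion step, namely checking that differentiating the asymptotic expansions of $\widehat{P}$ and substituting them into a composition preserves the expansion property in $X$; this is a routine consequence of the fact that asymptotic expansions in a single parameter are stable under smooth composition and under the inverse function theorem with parameters, as long as the leading term is a local diffeomorphism, which is precisely the hypothesis on $\widehat{P}_0$.
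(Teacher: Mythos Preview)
Your argument is correct and follows the natural route: the inverse function theorem with parameters gives the genuine inverse, a recursion on $|\alpha|$ produces the formal coefficients, and the remainder estimate is obtained by composing with the true inverse. Note, however, that the paper does not actually prove this proposition; it merely cites \cite{QS14} for the proof, so there is no in-paper argument to compare against beyond observing that your sketch is precisely the standard mechanism one would expect such a reference to contain.
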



\subsection{Spectral monodromy recovers the classical monodromy}

Suppose that $\{ U^j \}_{j \in \mathcal{J} }$ is an arbitrary small enough finite open covering of $U$. Here $\mathcal{J}$ is a finite index set. With notations as in Theorem \ref{A-A}, $F^{-1}(U)$ can be covered by a finite covering of angle-action charts $ \{
(V^j, \kappa ^j ) \} _ {j \in \mathcal{J} }$.
Then the classical monodromy is defined as the $\mathbb Z^n-$ bundle $H_1(\Lambda_c, \mathbb Z) \rightarrow c \in U $, whose transition maps between trivializations are
$$\{ {}^t \big ( d( ({\varphi ^i}) ^{-1} \circ \varphi ^j  ) \big ) ^{-1} \}=
 \{ {}^t \big ( M_{ij}^{cl} \big ) ^{-1} \}.$$
Here $H_1(\Lambda_c, \mathbb Z)$ is the first homology group of the Liouville torus $\Lambda_c$, and
$M _{ij}^{cl} $ are a locally constant matrixes of $ GL(2, \mathbb Z)$ defined on the nonempty overlaps $V^i \cap V^j$, $i, j \in
\mathcal{J}$, see \cite{Duis80}.

Hence, in regard to the definition of the spectral monodromy by \eqref{mono}, \eqref{transition-pseu} and the result \eqref{pre terme f tilde}, we can state that the spectral monodromy of non-selfadjoint small perturbed operators allows to recover the classical monodromy of the underlying completely integrable system of selfadjoint unperturbed operators. More precisely we have.
\begin{theo}
The spectral monodromy of $P_\varepsilon$ is the adjoint of the classical monodromy defined by $p$.
\end{theo}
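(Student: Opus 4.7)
The plan is to make the spectral monodromy explicit by computing transition maps between the leading terms $\widetilde{f}_{j,0}$ already constructed in the proof of Theorem~\ref{mtheo1}, and then to compare them directly with the transition cocycle of the classical monodromy recalled just above the statement.

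First I would fix a small enough finite open covering $\{U^j\}_{j \in \mathcal{J}}$ of $U$ such that on each $V^j = F^{-1}(U^j)$ there is an angle-action chart $\kappa^j$ in the sense of Theorem~\ref{A-A}, applied with the momentum map $(p, \langle q \rangle)$. This is legitimate by the hypotheses of Theorem~\ref{mtheo1} (in particular because $p$ and $\langle q \rangle$ Poisson-commute, and $(p, \langle q \rangle)$ is proper with connected fibers). Each such chart produces the local diffeomorphism $\varphi^j(\xi) = (p(\xi), \langle q \rangle(\xi))$ with inverse $(\varphi^j)^{-1}: U^j \to A^j$, together with the locally constant shift $\tau_c^j = S^j/(2\pi) - \xi_c^j$ obtained from \eqref{pt action}. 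On every good rectangle inside $U^j(\varepsilon)$, formula \eqref{pre terme f tilde} gives the leading term of a $h$-chart of $\sigma(P_\varepsilon)$ as
$$\widetilde{f}_{j,0} = \tau_c^j + (\varphi^j)^{-1},$$
which is well defined on all of $U^j$ and, crucially, independent of the good center chosen inside $U^j$.

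Next, on any nonempty overlap $U^i \cap U^j$, I would compute the differential of the transition map between these leading terms. Because $\tau_c^i$ and $\tau_c^j$ are locally constant on the overlap (by the classical result recalled as \eqref{pt action}), they disappear upon differentiation, leaving
$$d\bigl(\widetilde{f}_{i,0} \circ (\widetilde{f}_{j,0})^{-1}\bigr) = d\bigl((\varphi^i)^{-1} \circ \varphi^j\bigr) = M_{ij}^{cl}.$$
By Proposition~\ref{dl} these matrices lie in $GL(2,\mathbb{Z})$, and by Definition~\ref{monodromie spectrale} they represent the spectral monodromy cocycle. Hence the class $[\mathcal{M}_{sp}(P_\varepsilon)] \in \check{H}^1(U, GL(2,\mathbb{Z}))$ is represented by $\{M_{ij}^{cl}\}_{i,j \in \mathcal{J}}$.

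Finally I would compare with the classical monodromy cocycle $\{{}^t(M_{ij}^{cl})^{-1}\}$ recalled in the paragraph preceding the theorem. The involution $A \mapsto {}^t A^{-1}$ on $GL(2,\mathbb{Z})$ is by definition the passage to the adjoint (dual) representation, and it is an automorphism, so it descends to a bijection of $\check{H}^1(U, GL(2,\mathbb{Z}))$ that sends coboundaries to coboundaries. Applying it, $\{M_{ij}^{cl}\}$ is precisely the adjoint of $\{{}^t(M_{ij}^{cl})^{-1}\}$, which proves the theorem. The one delicate point I expect is the bookkeeping of orientations and transposes when comparing the $H_1(\Lambda_c,\mathbb{Z})$ bundle (whose transitions act on cycles) with the action-coordinate bundle (whose transitions act on the dual lattice); I would double-check this on a reference example such as the spherical pendulum or a generic focus-focus model, where both monodromies are explicitly known, to confirm that ``adjoint'' is the correct identification rather than merely ``inverse transpose up to conjugation.''
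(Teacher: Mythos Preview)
Your proposal is correct and follows essentially the same route as the paper: the paper's argument is precisely to combine \eqref{transition-pseu} with the explicit form \eqref{pre terme f tilde} of the leading terms to identify the spectral cocycle with $\{M_{ij}^{cl}\} = \{d((\varphi^i)^{-1}\circ\varphi^j)\}$, and then to compare with the classical cocycle $\{{}^t(M_{ij}^{cl})^{-1}\}$ recalled just before the theorem. Your write-up is in fact more detailed than the paper's, which states the theorem as an immediate consequence of that comparison without spelling out the differentiation step or the sanity check you propose.
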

This result is seminar to an one of paper \cite{QS14}, operators in both cases are related to completely integrable systems.


In conclusion, the spectral monodromy of small non-selfadjoint perturbations of a selfadjoint operator, admitting a principal symbol that is completely integrable, is well defined directly from its spectrum, independent of small perturbations and the classical parameter.

Moreover, the spectral monodromy allows to recover the classical monodromy of integrable systems. It shows the important relationship between quantum and classical mechanics.


\section*{Acknowledgement} This work was completed during my visit to the Vietnam Institute for Advanced Study in Mathematics (VIASM). I would like to thank the Institute for its support and hospitality. I would like also to thank the Vietnam National University of Agriculture, who gave me a good opportunity to develop my research.


\bigskip

\end{document}